\let\c\overline
\theoremstyle{plain}
\newtheorem{theorem}{Theorem}[section]
\newtheorem*{theorem*}{Theorem}
\newtheorem*{theorem**}{Theorem \ref{thm-hyp}}
\newtheorem{lemma}[theorem]{Lemma}
\newtheorem{proposition}[theorem]{Proposition}
\newtheorem{remark}[theorem]{Remark}
\newtheorem*{question*}{Question}
\newtheorem*{mt*}{Main Theorem}
\newcommand\C{{\mathbb C}}
\newcommand\R{{\mathbb R}}
\newcommand\Z{{\mathbb Z}}
\newcommand{\de}[2]{\frac{\partial #1}{\partial #2}}
\newcommand{\del}{\partial}
\newcommand{\delbar}{\overline{\del}}
\renewcommand{\H}{\mathcal{H}}
\DeclareMathOperator{\vol}{Vol}
\DeclareMathOperator{\im}{im}
\let\c\overline
\let\phi\varphi
\title[Dolbeault harmonic $(1,1)$-forms on almost Hermitian $4$-manifolds]{On the dimension of Dolbeault harmonic $(1,1)$-forms on almost Hermitian $4$-manifolds}
\author{Riccardo Piovani}
\address{Dipartimento di Scienze Matematiche, Fisiche e Informatiche\\
Unit\`{a} di Matematica e Informatica\\
Universit\`{a} degli Studi di Parma\\
Parco Area delle Scienze 53/A \\
43124 Parma, Italy}
\email{riccardo.piovani@unipr.it}
\author{Adriano Tomassini}
\address{Dipartimento di Scienze Matematiche, Fisiche e Informatiche\\
Unit\`{a} di Matematica e Informatica\\
Universit\`{a} degli Studi di Parma\\
Parco Area delle Scienze 53/A \\
43124 Parma, Italy}
\email{adriano.tomassini@unipr.it}
\keywords{almost complex manifold; Dolbeault Laplacian; 4-manifold}
\thanks{\newline 
The first author is partially supported by GNSAGA of INdAM.
The second author is partially supported by the Project PRIN 2017 ``Real and Complex Manifolds: Topology, Geometry and holomorphic dynamics'' and by GNSAGA of INdAM}
\subjclass[2020]{32Q60; 53C15; 58A14}
\begin{document}
\maketitle
%\tableofcontents

\begin{abstract} 
We prove that the dimension $h^{1,1}_{\delbar}$ of the space of Dolbeault harmonic $(1,1)$-forms is not necessarily always equal to $b^-$ on a compact almost complex $4$-manifold endowed with an almost Hermitian metric which is not locally conformally almost K\"ahler.
Indeed, we provide examples of non integrable, non locally conformally almost K\"ahler, almost Hermitian structures on compact $4$-manifolds with $h^{1,1}_{\delbar}=b^-+1$. This gives an answer to \cite[Question 3.3]{Ho} by Holt.
\end{abstract}

\section{Introduction}
Let $(M,J)$ be an almost complex manifold of real dimension $2n$. Given, on $(M,J)$, an almost Hermitian metric $g$, with fundamental form $\omega$, the triple $(M,J,\omega)$ will be called an almost Hermitian manifold. By the map $*:A^{p,q}\to A^{n-q,n-p}$, we denote the $\C$-linear extension of the real Hodge $*$ operator. The exterior derivative decomposes as 
\[
d=\mu+\del+\delbar+\c\mu,
\]
and we set
$\del^*:=-*\delbar*$ and $\delbar^*:=-*\del*$ to be the formal adjoint operators respectively of $\del$ and $\delbar$. Recall that
\begin{gather*}
\Delta_{\delbar}:=\delbar\delbar^*+\delbar^*\delbar
\end{gather*}
is the Dolbeault Laplacian, which is a formally self adjoint elliptic operator of order $2$. We set 
\[
\H^{p,q}_{\delbar}:=\ker\Delta_{\delbar}\cap A^{p,q},
\]
the space of Dolbeault harmonic $(p,q)$-forms.  If $M$ is compact, it is well known that the dimensions
\[
h^{p,q}_{\delbar}:=\dim_\C\H^{p,q}_{\delbar}
\]
are finite. Note that, a priori, $\del^*,\delbar^*,\Delta_{\delbar},\H^{p,q}_{\delbar},h^{p,q}_{\delbar}$ depend both on the almost complex structure $J$ and on the almost Hermitian metric $\omega$.

If $J$ is integrable, i.e., if $(M,J)$ is a complex manifold, then, assuming the compactness of $M$, by Hodge theory we know that the space of Dolbeault harmonic forms is isomorphic to the Dolbeault cohomology, i.e.,
\[
\H^{p,q}_{\delbar}\cong H^{p,q}_{\delbar}:=\frac{\ker\delbar\cap A^{p,q}}{\im \delbar\cap A^{p,q}}.
\]
Since the Dolbeault cohomology is a complex invariant, the numbers $h^{p,q}_{\delbar}$ do not depend on the choice of the metric in the integrable case.

Conversely, in the almost complex setting, Kodaira and Spencer asked the following question, which appeared as Problem 20 in Hirzebruch’s 1954 problem list \cite{Hi}: given a compact almost Hermitian manifold $(M,J,\omega)$, does $h^{p,q}_{\delbar}$ depend on the choice of the almost Hermitian metric $\omega$?
Recently in \cite{HZ} Holt and Zhang solved this problem, proving that $h^{p,q}_{\delbar}$ indeed depends on the choice of the metric. They provided an explicit example,  building a family of almost Hermitian structures on the Kodaira-Thurston manifold, which has real dimension $4$. They also proved that on every $4$-dimensional compact almost Hermitian manifold $(M,J,\omega)$, if the metric $\omega$ is almost K\"ahler, i.e., $d\omega=0$, then $h^{1,1}_{\delbar}=b^-+1$. Here $b^-$ is the dimension of the space of anti-self-dual harmonic forms, which is a topological invariant (see, e.g., \cite{DK}).

The study of the number $h^{1,1}_{\delbar}$ on compact almost Hermitian $4$-manifolds $(M,J,\omega)$ has been continued by Tardini and the second author in \cite{TT}.
Following their notation, we say that $\omega$ is \emph{strictly locally conformally almost K\"ahler} if
\[
d\omega=\theta\wedge\omega,
\]
and $\theta\in A^1$ is $d$-closed but non $d$-exact. Conversely, we say that $\omega$ is \emph{globally conformally almost K\"ahler}, if
\[
d\omega=\theta\wedge\omega,
\]
and $\theta\in A^1$ is $d$-exact. Indeed, if $\theta=dh$, then the metric $e^{-h}\omega$ is almost K\"ahler. Also note that, for any given almost Hermitian metric $\omega$, the $1$-form $\theta$ such that $d\omega=\theta\wedge\omega$ is uniquely determined by the Lefschetz isomorphism.
 We will also say that $\omega$ is \emph{locally conformally almost K\"ahler} if it is either strictly locally conformally almost K\"ahler or globally conformally almost K\"ahler.
 
Tardini and the second author proved that $h^{1,1}_{\delbar}=b^-$ on every compact almost complex 4-manifold with a strictly locally conformally almost K\"ahler metric. They also noted that $h^{1,1}_{\delbar}$ is a conformal invariant on almost Hermitian 4-manifolds, which implies that $h^{1,1}_{\delbar}=b^-+1$ on every compact almost complex 4-manifold with a globally conformally almost K\"ahler metric, by the previous result by Holt and Zhang in \cite{HZ}. 
Moreover, very recently, in \cite[Theorem 3.1]{Ho} Holt proved that $h^{1,1}_{\delbar}=b^-+1$ and $h^{1,1}_{\delbar}=b^-$ are the only two possible options on a compact almost Hermitian 4-manifold. In the remaining case when the almost Hermitian metric is not locally conformally almost K\"ahler, in general not much is known.
For other recent and related results concerning the study of the spaces of harmonic forms on compact almost Hermitian manifolds, see \cite{cattaneo-tardini-tomassini}, \cite{HZ2} and \cite{tardini-tomassini-dim6} for Dolbeault harmonic forms, and \cite{Ho}, \cite{piovani-tardini} and \cite{PT4} for Bott-Chern harmonic forms.

The study of $h^{1,1}_{\delbar}$ on compact almost Hermitian 4-manifolds is further motivated by what follows. In the integrable case, on a compact complex surface $(M,J)$, it is well known that the first Betti number $b^1$ is even if and only if $(M,J)$ admits a K\"ahler metric, and $b^1$ is even if and only if $h^{1,1}_{\delbar}=b^-+1$, see e.g., \cite{BPV}.  %The K\"ahler criterion using $b^1$ does not extend to non integrable almost complex structures. However, t
Thanks to the previously mentioned results, it is reasonable to expect $h^{1,1}_{\delbar}$ to detect almost K\"ahlerness. Motivated by this, Holt asked the following

\begin{question*}[{\cite[Question 3.3]{Ho}}]
On a compact almost Hermitian $4$-manifold, does the value of $h^{1,1}_{\delbar}$ gives a full description of whether an almost Hermitian metric is conformally almost K\"ahler? Specifically, in the case when the metric is not locally conformally almost K\"ahler, do we always have $h^{1,1}_{\delbar}=b^-$?
\end{question*}
In \cite{Ho} Holt also computed the value of $h^{1,1}_{\delbar}=b^-$ for a large family of almost Hermitian structures on the Kodaira-Thurston manifold and showed that his question is answered positively in that example.

%Let $M$ be a Hyperelliptic surface in the first isomorphism class as in Hasegawa, \cite[p. 755]{Ha}.%, corresponding to $\eta=\pi$ and $p=q=s=t=0$.
In this note, we 
%present examples of non-integrable almost complex structures on compact $4$-manifolds endowed with almost Hermitian metrics which are not locally conformally almost K\"ahler and show that $h^{1,1}_{\delbar}$ is not necessarily always equal to $b^-$. Indeed, we 
prove the following

\begin{theorem**}
Let $M$ be a Hyperelliptic surface in the first isomorphism class. On $M$, there exist two non integrable almost Hermitian structures $(J_1,\omega_1)$ and $(J_2,\omega_2)$ which are not locally conformally almost K\"ahler and which satisfy 
\[
h^{1,1}_{\delbar}(M,J_1,\omega_1)=2=b^-+1,\ \ \ h^{1,1}_{\delbar}(M,J_2,\omega_2)=1=b^-.
\]
\end{theorem**}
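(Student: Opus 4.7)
The plan is to realize the Hyperelliptic surface $M$ in the first isomorphism class as a finite quotient $(E_1\times E_2)/G$ of a product of elliptic curves by a finite group $G$ of biholomorphisms (combining translations with an involution on one factor), and to work throughout with a global invariant coframe $\{e^1,e^2,e^3,e^4\}$ inherited from the covering torus and compatible with the $G$-action. Since $M$ is K\"ahler with $b_1=2$ and $h^{2,0}=0$, one has $b^-=1$, so the desired values $h^{1,1}_{\delbar}=2$ and $h^{1,1}_{\delbar}=1$ do match $b^-+1$ and $b^-$ respectively.

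For each $i\in\{1,2\}$, I would then construct a $(1,0)$-coframe $\{\phi^1_i,\phi^2_i\}$ by perturbing the canonical K\"ahler $(1,0)$-forms via $G$-equivariant coefficients depending on one or two real coordinates of the base torus, schematically by replacing $e^3+\sqrt{-1}\,e^4$ with $(e^3+\sqrt{-1}\,e^4)+g_i\,(e^3-\sqrt{-1}\,e^4)$ for some smooth function $g_i$ that descends to $M$. The induced almost complex structure $J_i$ and fundamental form $\omega_i:=\tfrac{\sqrt{-1}}{2}\bigl(\phi^1_i\wedge\overline{\phi^1_i}+\phi^2_i\wedge\overline{\phi^2_i}\bigr)$ are then available for direct computation. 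Using the structure equations, I would verify (i) non-integrability by checking that $d\phi^a_i$ has a non-vanishing $(0,2)$-component for some $a$, and (ii) failure of the locally conformal almost K\"ahler condition by computing the unique Lee form $\theta_i$ with $d\omega_i=\theta_i\wedge\omega_i$ and showing $d\theta_i\neq 0$. The two functions $g_1$ and $g_2$ are to be tuned so that the ensuing harmonic calculation yields distinct dimensions.

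To compute $h^{1,1}_{\delbar}$, I would expand a general $(1,1)$-form as $\alpha=\sum_{a,b}u_{ab}\,\phi^a_i\wedge\overline{\phi^b_i}$ with smooth coefficients satisfying $u_{ba}=\overline{u_{ab}}$, and translate the conditions $\delbar\alpha=0$ and $\delbar^*\alpha=-\ast\partial\ast\alpha=0$ into a coupled first order linear PDE system in the $u_{ab}$, using the explicit description of $\ast$ on $(1,1)$-forms in real dimension $4$. Because the structures retain translational symmetry along the directions of the covering torus that are preserved both by $G$ and by the perturbation, I would Fourier-expand the $u_{ab}$ along those directions and analyze each non-zero frequency block separately. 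The main obstacle, mirroring similar computations on the Kodaira-Thurston manifold in \cite{HZ} and \cite{Ho}, is to show that for every non-trivial Fourier mode the harmonicity equations admit only the zero solution, thereby reducing the problem to a finite-dimensional algebraic system on the invariant part; this step relies on an integration-by-parts argument exploiting the ellipticity of $\Delta_{\delbar}$. Once reduced to the invariant case the kernel becomes a finite-dimensional linear algebra problem, and imposing $G$-invariance singles out the true harmonic $(1,1)$-forms on $M$. The contrast between $h^{1,1}_{\delbar}=2$ for $(J_1,\omega_1)$ and $h^{1,1}_{\delbar}=1$ for $(J_2,\omega_2)$ should then arise from an explicit numerical condition on the $g_i$ that either admits or forbids one extra invariant harmonic $(1,1)$-form beyond the one always present.
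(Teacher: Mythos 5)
Your outline identifies the right manifold and the right kind of deformations, but as written it has genuine gaps, and it also misses the structural results that make the theorem provable without hard analysis. First, nothing is actually computed: the functions $g_1,g_2$ are never specified, so non-integrability, the failure of the locally conformally almost K\"ahler condition, and above all the values of $h^{1,1}_{\delbar}$ are not established. Second, and more seriously, the step you yourself flag as ``the main obstacle'' --- showing that every non-trivial Fourier mode of the coefficients $u_{ab}$ forces the form to vanish --- is precisely the hard part of such computations (cf.\ the lengthy mode-by-mode analysis in \cite{HZ}), and ``an integration-by-parts argument exploiting ellipticity'' is not an argument: ellipticity gives finite-dimensionality of the kernel, not the vanishing of non-invariant solutions. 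Moreover, if your $g_i$ are genuinely non-constant, the almost Hermitian structure is no longer invariant under the full torus action, and it is not clear that the harmonicity system even decouples into Fourier blocks along the surviving directions. A smaller but real error: you impose $u_{ba}=\overline{u_{ab}}$, i.e.\ you restrict to real $(1,1)$-forms, but $\H^{1,1}_{\delbar}$ is a complex vector space which in general is not closed under conjugation (conjugation intertwines $\Delta_{\delbar}$ with $\Delta_{\del}$), so this restriction could lose harmonic forms.

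The paper avoids all of this by taking left invariant structures (constant deformation parameters) on the solvmanifold model of $M$ and by invoking two results you should use. By \cite[Theorem 3.1]{Ho}, on any compact almost Hermitian $4$-manifold $h^{1,1}_{\delbar}$ equals either $b^-$ or $b^-+1$; hence for the first structure it suffices to exhibit a $2$-dimensional space of \emph{invariant} Dolbeault harmonic $(1,1)$-forms, a purely finite-dimensional linear computation, with no need to rule out non-invariant solutions. For the second structure, where one must prove the upper bound $h^{1,1}_{\delbar}=b^-$, the paper applies Theorem \ref{teorema-piovani} (from \cite{P}): for a left invariant structure on a compact quotient of a Lie group, $h^{1,1}_{\delbar}=b^-+1$ if and only if there exists a left invariant anti-self-dual $(1,1)$-form $\gamma$ with $id^c\gamma=d\omega$; showing that no such $\gamma$ exists is again linear algebra. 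Unless you restrict to invariant structures and use (or reprove) these two theorems, your plan reduces the statement to a non-constant-coefficient PDE problem that your proposal does not solve.
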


This gives a negative answer to \cite[Question 3.3]{Ho} by Holt.

The paper is organized as follows. Section \ref{sec-hyp} is devoted to the proof of Theorem \ref{thm-hyp}. The structure $(J_1,\omega_1)$ is obtained by deforming the standard K\"ahler structure $J_0$ on the Hyperelliptic surface $M$ into a non integrable almost complex structure $J_t$, and then by deforming the diagonal metric via a real parameter $\rho$ to obtain a metric $\omega_{t,\rho}$ which is not locally conformally almost K\"ahler and satisfies $h^{1,1}_{\delbar}=b^-+1$. The structure $(J_2,\omega_2)$ is obtained by deforming the diagonal metric, again via a real parameter, on a non integrable almost complex structure $J$ on $M$. The metric $\omega_2$ is not locally conformally almost K\"ahler and satisfies $h^{1,1}_{\delbar}=b^-$. In Section \ref{sec-nilm} we consider the $4$-dimensional nilmanifold $\mathcal{N}$ which does not admit any integrable almost complex structure. We choose an almost complex structure on $\mathcal{N}$, and the diagonal metric turns out to be not locally conformally almost K\"ahler. Again, we prove that $h^{1,1}_{\delbar}=b^-+1$, see Proposition \ref{prop-nilm}.

\medskip\medskip
\noindent{\em Acknowledgments.}
We are sincerely grateful to Tom Holt, for having shared with us his paper \cite{Ho} and to Lorenzo Sillari and Nicoletta Tardini, for interesting conversations on the subject of the paper. We also would like to thank the anonymous referee, for useful comments which improved the presentation of the results of the paper and for having pointed out an error in the argument of the proof of Theorem \ref{main}, which did not affect the result and which has been repaired in the present version.

\section{$h^{1,1}_{\delbar}$ of two almost Hermitian structures on a Hyperelliptic surface}\label{sec-hyp}

We start by recalling the description of the first isomorphism class of the Hyperelliptic surfaces as solvmanifolds.
Following Hasegawa, \cite[Section 3, pp. 754-755]{Ha}, let $G$ be the group $\C^2$ together with the operation
\[
(w^1,w^2)\cdot(z^1,z^2)=(w^1+e^{i\pi\frac{w^2+\c{w^2}}2}z^1,w^2+z^2),
\]
and let $\Gamma$ be the subgroup of $G$ given by $(\Z+i\Z)^2$. This corresponds to the hyperelliptic surface with $\eta=\pi$ and $p=q=s=t=0$ in the notation of Hasegawa. Let $M$ be the solvmanifold $\Gamma\backslash G$, and denote by $x^1,y^1,x^2,y^2$ the local coordinates of $M$ induced from $\C^2$, i.e., $z^1=x^1+iy^1$, $z^2=x^2+iy^2$. The vector fields
\begin{gather*}
e_1=\cos(\pi x^2)\de{}{x^1}+\sin(\pi x^2)\de{}{y^1},\\ e_2=-\sin(\pi x^2)\de{}{x^1}+\cos(\pi x^2)\de{}{y^1},\\ e_3=\de{}{x^2},\ e_4=\de{}{y^2}
\end{gather*}
are left invariant and form a basis of $TM$ at each point. The dual left invariant coframe is given by
\begin{gather*}
e^1=\cos(\pi x^2)dx^1+\sin(\pi x^2)dy^1,\\ e^2=-\sin(\pi x^2)dx^1+\cos(\pi x^2)dy^1,\\ e^3=dx^2,\ e^4=dy^2,
\end{gather*}
with structure equations
\begin{equation}\label{eq-struct-real}
de^1=-\pi e^{23},\ de^2=\pi e^{13},\ de^3=0,\ de^4=0.
\end{equation}
The de Rham cohomology of $M$ is computed by using left invariant forms, %see e.g. \cite{ADT}, 
therefore
\begin{equation*}
H_{dR}^1=\R<e^3,e^4>,\ \ \ H_{dR}^2=\R<e^{12},e^{34}>.
\end{equation*}

On $M$, consider the usual left invariant integrable almost complex structure $J_0$ given by
\begin{gather*}
\phi^1=e^1+ie^2,\\
\phi^2=e^3+ie^4
\end{gather*}
being a global coframe of $T^{1,0}(M,J_0)$.
The corresponding structure equations are
\begin{equation}\label{eq-struct-complex}
d\phi^1=i\frac\pi2(\phi^{12}+\phi^{1\c2}),\ \ \ d\phi^2=0.
\end{equation}
Endow $(M,J_0)$ with the K\"ahler metric given by the compatible symplectic form
\begin{equation*}
\omega=e^{12}+e^{34}=\frac{i}2(\phi^{1\c1}+\phi^{2\c2}).
\end{equation*}
Note that $b^-=1$ and $h^{1,1}_{\delbar}(M,J_0,\omega)=b^-+1=2$.

%In the remaining of the section, we present the following two non locally conformally almost K\"ahler structures on the Hyperelliptic surface $M$. First, we deform the standard K\"ahler structure on $M$ into a non integrable almost complex structure, and then we deform the diagonal metric with a real parameter to obtain an example where $h^{1,1}_{\delbar}=b^-+1$. Next, on the usual non integrable almost complex structure on $M$, we deform the diagonal metric, again with a real parameter, to obtain an example where $h^{1,1}_{\delbar}=b^-$.

%In the remaining of the section, we will prove the following
%\begin{proposition}\label{prop-hyp}
%Let $M$ be the Hyperelliptic surface. On $M$, there exist two non integrable almost Hermitian structures $(J_1,\omega_1)$ and $(J_2,\omega_2)$ which are not locally conformally almost K\"ahler and satisfy $h^{1,1}_{\delbar}(M,J_1,\omega_1)=2=b^-+1$ and $h^{1,1}_{\delbar}(M,J_2,\omega_2)=1=b^-$.
%\end{proposition}

%\subsection{Non-integrable deformation of the standard K\"ahler structure}

Let us consider the following left invariant non integrable almost complex deformation $(M,J_t)$ of the complex manifold $(M,J_0)$. 

\begin{lemma}\label{lemma-def}
For $t\in\C$, $|t|<1$, let $J_t$ be the almost complex structure on the Hyperelliptic surface $M$ defined by 
\begin{equation*}
\phi^1_t=\phi^1+t\phi^{\c1},\ \ \ \phi^2_t=\phi^2
\end{equation*}
being a coframe of $T^{1,0}(M,J_t)$. Then, the corresponding structure equations are
\begin{equation}\label{eq-struct-def}
d\phi^1_t=\frac{i\pi}{2(1-|t|^2)}\left((1+|t|^2)(\phi^{12}_t+\phi_t^{1\c2})+2t(\phi^{2\c1}_t-\phi^{\c1\c2}_t)\right),\ \ \ d\phi^2_t=0,
\end{equation}
and for $0<\rho<1$, $t\ne0$ the almost Hermitian metric
\begin{equation*}
\omega_{t,\rho}=\frac{i}2(\phi^{1\c1}_t+\phi^{2\c2}_t)+\frac\rho2(\phi^{1\c2}_t-\phi^{2\c1}_t),
\end{equation*}
is not locally conformally almost K\"ahler.
\end{lemma}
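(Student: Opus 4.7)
For the claim that $(\phi^1_t,\phi^2_t)$ spans the $(1,0)$-coframe of a well-defined almost complex structure $J_t$, I would argue by linear algebra: the transition matrix from $(\phi^1,\phi^{\c1})$ to $(\phi^1_t,\phi^{\c1}_t)$ has determinant $1-|t|^2>0$ for $|t|<1$, so together with $\phi^2_t=\phi^2$ and their conjugates, $\phi^1_t,\phi^2_t$ give a global complex coframe on $M$. To derive \eqref{eq-struct-def}, I would invert the change of basis as
\[
\phi^1=\tfrac{1}{1-|t|^2}(\phi^1_t-t\phi^{\c1}_t),\qquad \phi^{\c1}=\tfrac{1}{1-|t|^2}(\phi^{\c1}_t-\bar t\phi^1_t),
\]
differentiate $\phi^1_t=\phi^1+t\phi^{\c1}$ using \eqref{eq-struct-complex} and its conjugate, re-expand each resulting $2$-form in the new basis (using $\phi^2_t=\phi^2$), and collect like terms. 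The identity $d\phi^2_t=0$ is immediate from $\phi^2_t=\phi^2$ and $d\phi^2=0$.

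The substantive step is to show that $\omega_{t,\rho}$ is not locally conformally almost K\"ahler for $t\ne 0$ and $0<\rho<1$. Because $\dim_\R M=4$, the Lefschetz map $\alpha\mapsto\alpha\wedge\omega_{t,\rho}$ from $A^1$ to $A^3$ is a pointwise isomorphism, so the Lee form $\theta\in A^1$ satisfying $d\omega_{t,\rho}=\theta\wedge\omega_{t,\rho}$ exists uniquely, and the metric is locally conformally almost K\"ahler precisely when $d\theta=0$. My plan is to: (i) compute the $(2,1)$-component $\del\omega_{t,\rho}$ of $d\omega_{t,\rho}$ using \eqref{eq-struct-def} together with $d\phi^2_t=d\phi^{\c2}_t=0$ --- a direct check shows that $\del(\phi^{1\c1}_t+\phi^{2\c2}_t)=0$ (the two contributions $\del\phi^1_t\wedge\phi^{\c1}_t$ and $-\phi^1_t\wedge\del\phi^{\c1}_t$ cancel thanks to $\overline{d\phi^1_t}=d\phi^{\c1}_t$), so only the $\rho$-term of $\omega_{t,\rho}$ contributes and $\del\omega_{t,\rho}$ reduces to a nonzero multiple of $\phi^{12\c2}_t$; (ii) write $\theta^{1,0}=A\phi^1_t+B\phi^2_t$ and solve the $2\times 2$ linear system obtained from matching the coefficients of $\phi^{12\c1}_t$ and $\phi^{12\c2}_t$ in $\theta^{1,0}\wedge\omega_{t,\rho}=\del\omega_{t,\rho}$, whose determinant is proportional to $1-\rho^2\ne 0$; this gives an explicit $\theta^{1,0}=A(\phi^1_t+i\rho\phi^2_t)$ with $A$ a nonzero function of $t,\rho$, and $\theta=\theta^{1,0}+\overline{\theta^{1,0}}$ by reality; (iii) compute $d\theta=A\,d\phi^1_t+\bar A\,d\phi^{\c1}_t$ (since $A,B$ are constants and $d\phi^2_t=d\phi^{\c2}_t=0$) and verify that this is nonzero, for instance by looking at its $(2,0)$-part read off from \eqref{eq-struct-def}.

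I expect step (iii) to be the main obstacle. After substituting the explicit expression for $A$, the requirement $[d\theta]^{2,0}=0$ should collapse, after a short algebraic manipulation, to $(1+|t|^2)^2=4|t|^2$, i.e.\ $(|t|-1)^2=0$, which is excluded by $|t|<1$; hence $d\theta\ne 0$ and the metric is not locally conformally almost K\"ahler. The delicate point is tracking signs and conjugations consistently in the expansions of $d\phi^1_t$, $d\phi^{\c1}_t$ and their wedges with $\omega_{t,\rho}$; to guard against error I would cross-check by translating to the real coframe $e^1,\dots,e^4$ with $t=a+ib$ and using \eqref{eq-struct-real}, where the $t=0$ specialisation already produces a transparent identity of the form $d\theta=-\frac{\pi^2\rho}{1-\rho^2}\,e^{23}$, confirming the non-vanishing.
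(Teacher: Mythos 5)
Your proposal is correct and follows essentially the same route as the paper: derive \eqref{eq-struct-def} by inverting the change of coframe, compute the Lee form $\theta_{t,\rho}$ via the pointwise Lefschetz isomorphism (your $\theta^{1,0}=A(\phi^1_t+i\rho\phi^2_t)$ with $A=\frac{\pi\rho\bar\alpha}{2(1-\rho^2)\gamma}$, $\alpha=1+|t|^2-2t$, $\gamma=1-|t|^2$, is exactly the paper's formula), and check $d\theta_{t,\rho}\neq0$. Your predicted collapse of $[d\theta]^{2,0}=0$ to $(1+|t|^2)^2=4|t|^2$, impossible for $|t|<1$, is indeed what the computation gives, so the argument goes through.
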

\begin{proof}
The structure equations \eqref{eq-struct-def} of $(M,J_t)$ derive from the structure equations \eqref{eq-struct-complex} of the complex manifold $(M,J_0)$ by elementary computations.\newline
Note that the diagonal metric
\begin{equation*}
\omega_t=\frac{i}2(\phi^{1\c1}_t+\phi^{2\c2}_t)
\end{equation*}
is almost K\"ahler. Conversely, let us show that $\omega_{t,\rho}$ is not locally conformally almost K\"ahler for $0<\rho<1$ and $t\ne0$.
In the following, it will be convenient to set
\[
\alpha=1+|t|^2-2t\in\C,\ \ \ \gamma=1-|t|^2\in\R.
\]
We compute
\[
d\omega_{t,\rho}=\frac{i\pi\rho}{4\gamma}\left(\c\alpha\phi^{12\c2}_t-\alpha\phi^{2\c1\c2}_t\right)=\theta_{t,\rho}\wedge\omega_{t,\rho}
\]
with
\[
\theta_{t,\rho}=\frac{\pi\rho}{2(1-\rho^2)\gamma}\left(\c\alpha(\phi^1_t+i\rho\phi^2_t)+\alpha(\phi^{\c1}_t-i\rho\phi^{\c2}_t)\right).
\]
For $0<\rho<1$ and $t\ne0$, the $1$-form $\theta_{t,\rho}$ is not closed, i.e., $d\theta_{t,\rho}\ne0$, therefore $\omega_{t,\rho}$ is not locally conformally almost K\"ahler.
\end{proof}

By Lemma \ref{lemma-def}, for $t\in\C$, $0\ne|t|<1$, and $0<\rho<1$, we have that $(J_t,\omega_{t,\rho})$ is a non integrable almost Hermitian structure on $M$ which is not locally conformally almost K\"ahler. %Let us compute the number $h^{1,1}_{\delbar}(M,J_t,\omega_{t,\rho})$.
We prove the following

\begin{theorem}
For $t\in\C$, $0\ne|t|<1$, and $0<\rho<1$, let $M$ be the Hyperelliptic surface endowed with the non integrable, non locally conformally almost K\"ahler structure $(J_t,\omega_{t,\rho})$ of Lemma \ref{lemma-def}. Then, 
\[
h^{1,1}_{\delbar}(M,J_t,\omega_{t,\rho})=2=b^-+1.
\]
\end{theorem}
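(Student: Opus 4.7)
The plan is to compute $\H^{1,1}_{\delbar}(M,J_t,\omega_{t,\rho})$ directly and verify that it has complex dimension $2$; since $b^-(M)=1$, this is exactly $b^-+1$. To this end, I expand a generic real $(1,1)$-form on $M$ in the left-invariant coframe:
\[
\psi \;=\; i f_1\,\phi^{1\c1}_t + i f_2\,\phi^{2\c2}_t + g\,\phi^{1\c2}_t - \bar g\,\phi^{2\c1}_t,
\]
with $f_1,f_2\in\cinf(M,\R)$ and $g\in\cinf(M,\C)$. From the structure equations \eqref{eq-struct-def} one reads off the action of $\delbar$ and $\partial$ on each coframe element; substituting and collecting terms in the natural basis of $A^{1,2}$, the equation $\delbar\psi = 0$ becomes a first-order linear PDE system for $f_1,f_2,g$ along the invariant vector fields dual to $\phi^k_t$.

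For the adjoint equation, I use $\delbar^*\psi = -*\partial*\psi$ with the Hodge star taken with respect to $\omega_{t,\rho}$. The metric is represented in the coframe $\{\phi^k_t\}$ by a positive definite Hermitian matrix with diagonal entries $\tfrac12$ and off-diagonal entries $\mp i\rho/2$, of determinant $(1-\rho^2)/4$ (positive precisely when $0<\rho<1$, consistently with Lemma \ref{lemma-def}). Using this, $*\psi$ is an explicit $\rho$-dependent linear combination of the $\phi^{i\c j}_t$; computing $\partial*\psi\in A^{2,1}$ via the structure equations and then applying $-*$ produces $\delbar^*\psi\in A^{1,0}$, whose vanishing gives a second coupled first-order linear system.

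The main obstacle is to show that every harmonic solution must have $f_1,f_2,g$ constant on $M$. The standard strategy, used systematically in \cite{HZ,TT,Ho,cattaneo-tardini-tomassini}, is to combine the two systems so as to isolate a scalar elliptic equation on an appropriate combination of the coefficients---for instance, on the Lefschetz trace $\Lambda\psi$, or on a nonnegative quadratic quantity built from $f_1,f_2,g$---and then apply the maximum principle on the compact $M$. Equivalently, integrating the identity $\langle\Delta_{\delbar}\psi,\psi\rangle_{L^2} = \|\delbar\psi\|^2_{L^2} + \|\delbar^*\psi\|^2_{L^2} = 0$ by parts forces all derivatives to vanish. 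A Fourier expansion along the torus directions of the solvmanifold offers an alternative route, killing every nonzero mode by ellipticity.

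After this reduction, harmonicity descends to a homogeneous linear system on the $4$-dimensional real space of left-invariant real $(1,1)$-forms. Solving this finite system, I expect to find exactly a $2$-dimensional kernel: one solution is an anti-self-dual primitive $(1,1)$-form providing the $b^-=1$ contribution, and the second survives thanks to $d\phi^2_t = 0$, which makes $\phi^{2\c2}_t$ automatically $\delbar$-closed and allows the off-diagonal $g$-components to be tuned so that $\delbar^*$ also vanishes. This yields $h^{1,1}_{\delbar}(M,J_t,\omega_{t,\rho}) = 2 = b^-+1$, as claimed.
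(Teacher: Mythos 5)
Your proposal has two genuine gaps, and it misses the structural trick that makes the paper's argument short. First, the reduction step --- showing that every Dolbeault harmonic $(1,1)$-form has constant coefficients --- is not actually carried out; it is only gestured at. The routes you mention do not work as stated: the identity $\langle\Delta_{\delbar}\psi,\psi\rangle_{L^2}=\|\delbar\psi\|^2_{L^2}+\|\delbar^*\psi\|^2_{L^2}=0$ is just the characterization of harmonicity ($\delbar\psi=0$ and $\delbar^*\psi=0$) and does not force $f_1,f_2,g$ to be constant; the maximum-principle and Fourier-mode arguments would require a genuine analysis of the coupled first-order system on the solvmanifold (compare the lengthy computations in \cite{HZ}), none of which appears. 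Second, even the finite-dimensional linear system on invariant forms is not solved --- you only ``expect'' a $2$-dimensional kernel, whereas this computation is precisely the substantive content of the proof and has to be done explicitly (the paper finds that the system reduces to the identity $\gamma^2+4|t|^2-\beta^2=0$, leaving $A,D$ free and determining $B,C$).

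The paper avoids your first gap entirely: it never proves that all harmonic forms are invariant. It exhibits a $2$-dimensional space of \emph{left invariant} Dolbeault harmonic $(1,1)$-forms, which gives only the lower bound $h^{1,1}_{\delbar}\ge 2=b^-+1$, and then invokes \cite[Theorem 3.1]{Ho}, which says that on a compact almost Hermitian $4$-manifold $h^{1,1}_{\delbar}$ can only equal $b^-$ or $b^-+1$; the lower bound therefore pins down the value. If you want to keep your plan of computing the full harmonic space with non-constant coefficients, you must supply the missing elliptic/Fourier analysis; otherwise, adopt the lower-bound-plus-Holt strategy and carry out the invariant computation in full (preferably in the unitary coframe $\psi^1=\phi^1_t+i\rho\phi^2_t$, $\psi^2=\sqrt{1-\rho^2}\,\phi^2_t$, which makes the Hodge star on $(1,1)$-forms diagonal instead of requiring the Hermitian matrix inversion you describe).
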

\begin{proof}
The fundamental form $\omega_{t,\rho}$ can be also rewritten as
\[
\omega_{t,\rho}=\frac{i}2(\psi^{1\c1}+\psi^{2\c2}),
\]
where we set
\[
\psi^1=\phi^1_t+i\rho\phi^2_t,\ \ \ \psi^2=\sqrt{1-\rho^2}\phi^2_t
\]
as a unitary global coframe of $T^{1,0}(M,J_t)$.
Define the volume form $\vol$ such that
\begin{equation*}
\vol=\frac{\omega_{t,\rho}^2}2=\frac14\psi^{12\c1\c2}.
\end{equation*}
In the following computations, it will be convenient to set
\[
\alpha=1+|t|^2-2t\in\C,\ \ \ \beta=1+|t|^2\in\R,\ \ \ \gamma=1-|t|^2\in\R.
\]
Let $\eta\in A^{1,1}(M,J_t)$ be a left invariant $(1,1)$-form on $(M,J_t)$. We can write
\[
\eta=A\psi^{1\c1}+B\psi^{1\c2}+C\psi^{2\c1}+D\psi^{2\c2},
\]
where $A,B,C,D\in\C$.
Applying the Hodge $*$ operator we get
\[
*\eta=D\psi^{1\c1}-B\psi^{1\c2}-C\psi^{2\c1}+A\psi^{2\c2}.
\]
We compute
\begin{align*}
\delbar\eta&=\frac{\pi}{2\gamma}\left(-A\rho\alpha+B2it\sqrt{1-\rho^2}+Ci\beta\sqrt{1-\rho^2}\right)\phi^{2\c1\c2}_t,
\end{align*}
and
\begin{align*}
\del*\eta&=\frac{\pi}{2\gamma}\left(D\rho\c\alpha-Bi\beta\sqrt{1-\rho^2}-C2i\c{t}\sqrt{1-\rho^2}\right)\phi^{12\c2}_t.
\end{align*}
Therefore $\delbar\eta=\del*\eta=0$ iff
\begin{numcases}{}
\label{eq delbar}
-A\rho\alpha+B2it\sqrt{1-\rho^2}+Ci\beta\sqrt{1-\rho^2}=0,\\
\label{eq delbarstar}
D\rho\c\alpha-Bi\beta\sqrt{1-\rho^2}-C2i\c{t}\sqrt{1-\rho^2}=0.
\end{numcases}
We sum \eqref{eq delbar} multiplied by $\beta$ and \eqref{eq delbarstar} multiplied by $2t$. We derive
\begin{equation}\label{eq c}
C=\frac{i\rho}{\gamma^2\sqrt{1-\rho^2}}\left(-A\beta\alpha+D2t\c\alpha\right).
\end{equation}
We sum \eqref{eq delbar} multiplied by $2\c{t}$ and \eqref{eq delbarstar} multiplied by $\beta$. We derive
\begin{equation}\label{eq b}
B=\frac{-i\rho}{\gamma^2\sqrt{1-\rho^2}}\left(-A2\c{t}\alpha+D\beta\c\alpha\right).
\end{equation}
Substituting \eqref{eq c} and \eqref{eq b} into the system given by \eqref{eq delbar} and \eqref{eq delbarstar}, we find
\begin{equation}\label{eq-ident}
\begin{cases}
A\alpha\rho(\gamma^2+4|t|^2-\beta^2)=0,\\
D\c\alpha\rho(\gamma^2+4|t|^2-\beta^2)=0.
\end{cases}
\end{equation}
Finally, note that \eqref{eq-ident} is an identity, since by definition
\[
\gamma^2+4|t|^2-\beta^2=0.
\]
This implies that $\eta\in\H^{1,1}_{\delbar}(M,J_t,\omega_{t,\rho})$ for every $A,D\in\C$ and for any $C,B\in\C$ given by equations \eqref{eq c} and \eqref{eq b}. It follows that $h^{1,1}_{\delbar}(M,J_t,\omega_{t,\rho})\ge2=b^-+1$. By \cite[Theorem 3.1]{Ho}, $h^{1,1}_{\delbar}$ can be equal only to $b^-$ or $b^-+1$, therefore we conclude that $h^{1,1}_{\delbar}(M,J_t,\omega_{t,\rho})=2=b^-+1$.
\end{proof}

In the remainder of this section, we will construct another non integrable almost complex structure $J$ on $M$ admitting an almost Hermitian metric $\omega$ which is Gauduchon (that is, in real dimension $4$, $\del\delbar\omega=0$ or, equivalently, $dJd\omega=0$) but not locally conformally almost K\"ahler. This structure extends the almost Hermitian structure on $M$ considered in \cite[Section 6]{PT4}.

\begin{lemma}\label{lemma-gau}
On the Hyperelliptic surface $M$, define the almost complex structure $J$ given by
\begin{gather*}
\phi^1=e^1+ie^3,\ \ \ 
\phi^2=e^2+ie^4
\end{gather*}
being a coframe of $T^{1,0}(M,J)$. Then, the corresponding structure equations are
\begin{equation}\label{eq-struct-gau}
d\phi^1=i\frac\pi4(-\phi^{12}-\phi^{1\c2}-\phi^{2\c1}+\phi^{\c1\c2}),\ \ \ d\phi^2=i\frac\pi2\phi^{1\c1},
\end{equation}
and for $0<\rho<1$ the almost Hermitian metric
\begin{equation*}
\omega_\rho=e^{13}+e^{24}+\rho e^{12}+\rho e^{34}=\frac{i}2(\phi^{1\c1}+\phi^{2\c2})+\frac\rho2(\phi^{1\c2}-\phi^{2\c1}),
\end{equation*}
is Gauduchon but not locally conformally almost K\"ahler.
\end{lemma}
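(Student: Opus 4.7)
The lemma contains three claims, and my plan is to verify each by a direct left invariant computation, working mostly in the real coframe $\{e^i\}$ and translating to the complex coframe only where necessary.

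For the structure equations \eqref{eq-struct-gau}, I would invert the definitions of $\phi^1,\phi^2$ to express $e^1,e^2$ as real parts and $e^3,e^4$ as imaginary parts of the complex coframe, then compute $d\phi^1 = de^1 + i\, de^3 = -\pi e^{23}$ and $d\phi^2 = de^2 + i\, de^4 = \pi e^{13}$ using \eqref{eq-struct-real}. Expanding $e^{23}$ and $e^{13}$ as wedge products in the complex coframe and collecting by bidegree yields precisely the stated formulas; the only care needed is tracking the factor $1/(2i)$ coming from $e^3 = (\phi^1-\bar\phi^1)/(2i)$ and $e^4 = (\phi^2-\bar\phi^2)/(2i)$.

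Next I would compute $d\omega_\rho$ directly. Since $de^3 = de^4 = 0$ and each of the pieces $e^{13}, e^{12}, e^{34}$ of $\omega_\rho$ has $d$-image either zero or a wedge that vanishes by a repeated index, only $d(e^{24}) = \pi\, e^{13}\wedge e^4$ survives, giving $d\omega_\rho = \pi e^{134}$. To verify the Gauduchon condition $dJd\omega_\rho = 0$, I would read off the action of $J$ on the real coframe from $J\phi^i = i\phi^i$, namely $Je^1 = -e^3,\ Je^3 = e^1,\ Je^2 = -e^4,\ Je^4 = e^2$, obtain $J(e^{134}) = -e^{123}$, and check that $d(e^{123})$ splits into $-\pi\, e^{23}\wedge e^{23}$ and $-\pi\, e^1 \wedge e^{13} \wedge e^3$, both of which vanish by repeated indices.

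Finally, to show $\omega_\rho$ is not locally conformally almost K\"ahler, I would use the Lefschetz isomorphism $\alpha \mapsto \alpha \wedge \omega_\rho \colon A^1 \to A^3$ to identify the unique Lee form. Writing $\theta_\rho = \sum_{i=1}^4 a_i e^i$ and imposing $\theta_\rho\wedge\omega_\rho = d\omega_\rho = \pi e^{134}$ produces a $4\times 4$ linear system in the basis $\{e^{ijk}\}$ whose unique solution is $a_2 = a_3 = 0$, $a_4 = \pi/(1-\rho^2)$, $a_1 = -\pi\rho/(1-\rho^2)$. Differentiating via \eqref{eq-struct-real} then yields $d\theta_\rho = \pi^2\rho/(1-\rho^2)\, e^{23}$, which is nonzero for $\rho\in(0,1)$. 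The entire proof consists of elementary wedge-product computations; the only mild subtlety is the coframe bookkeeping in deriving \eqref{eq-struct-gau}, and the cleanest step is the Gauduchon check, which collapses to a pair of repeated-index cancellations.
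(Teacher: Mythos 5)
Your proposal is correct and follows essentially the same route as the paper's proof: compute $d\omega_\rho=\pi e^{134}$, solve $\theta_\rho\wedge\omega_\rho=d\omega_\rho$ for the Lee form $\theta_\rho=-\frac{\pi\rho}{1-\rho^2}e^1+\frac{\pi}{1-\rho^2}e^4$, observe $d\theta_\rho=\frac{\pi^2\rho}{1-\rho^2}e^{23}\ne0$, and verify $dJ(\pi e^{134})=-\pi\,de^{123}=0$ by repeated-index cancellations. The only cosmetic differences are that you spell out the Lefschetz linear system for $\theta_\rho$ where the paper simply states the answer, and the paper additionally remarks that left invariance alone already forces the Gauduchon condition before doing the same explicit check you do.
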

\begin{proof}
The structure equations \eqref{eq-struct-gau} of $(M,J)$ derive from the real structure equations \eqref{eq-struct-real} of $M$ by elementary computations.\newline
Let us show that $\omega_{\rho}$ is not locally conformally almost K\"ahler for $0<\rho<1$.
We compute
\[
d\omega_\rho=\pi e^{134}=\theta_\rho\wedge\omega_\rho,
\]
with
\[
\theta_\rho=-\frac{\pi\rho}{1-\rho^2}e^1+\frac{\pi}{1-\rho^2}e^4.
\]
For $0<\rho<1$ the $1$-form $\theta_\rho$ is not closed, i.e., $d\theta_\rho\ne0$, therefore $\omega_{\rho}$ is not locally conformally almost K\"ahler. 
For $\rho=0$, note that $\theta_0$ is closed but non exact, thus $\omega_0$ is strictly locally conformally almost K\"ahler.\newline
Finally, since $\omega_\rho$ is left invariant, it is Gauduchon. However, let us check it explicitly. Indeed, for $0\le\rho<1$, we have
\begin{equation*}
dJd\omega_\rho=dJ\pi e^{134}=-\pi de^{312}=0.\qedhere
\end{equation*}
\end{proof}

By Lemma \ref{lemma-gau}, for $0<\rho<1$, we have that $(J,\omega_{\rho})$ is a non integrable almost Hermitian structure on $M$ which is not locally conformally almost K\"ahler. Let us compute the number $h^{1,1}_{\delbar}(M,J,\omega_{\rho})$. We will need the following result (see \cite[Theorem 3.6]{P}). Recall that $d^c=J^{-1}dJ=i(\delbar-\del+\mu-\c\mu)$.
\begin{theorem}\label{teorema-piovani}
Let $G$ be a $4$-dimensional Lie group and let $\Gamma$ be a discrete subgroup such that $M=\Gamma\backslash G$ is compact. Assume that $(J,\omega)$ is a left invariant almost Hermitian structure on $G$. Then $h^{1,1}_{\delbar}(M,J,\omega)=b^-+1$ if and only if there exists a left invariant anti-self-dual $(1,1)$-form $\gamma\in A^{1,1}(G,J)$ satisfying 
$$
id^c\gamma =d \omega.
$$
Otherwise, $h^{1,1}_{\delbar}(M,J,\omega)=b^-$.
\end{theorem}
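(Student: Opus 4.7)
The plan is to exploit the dichotomy $h^{1,1}_{\delbar}\in\{b^-,b^-+1\}$ from \cite[Theorem 3.1]{Ho} and to detect when the upper value is attained by means of an explicit left-invariant Ansatz. The structural input is the decomposition $A^{1,1}=\C\omega\oplus(A^{1,1})_0$ into the $\omega$-direction and its primitive complement, which in real dimension $4$ coincides with the space of anti-self-dual complex $(1,1)$-forms.

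First I would establish the baseline $h^{1,1}_{\delbar}\geq b^-$: every real anti-self-dual harmonic $2$-form $\eta$ is automatically primitive and of pure type $(1,1)$; by bidegree, $d\eta=0$ splits into $\del\eta=\delbar\eta=0$, while $\delbar^*\eta=-*\del*\eta=*\del\eta=0$ follows from $*\eta=-\eta$. Complexifying, the real $b^-$-dimensional ASD harmonic space embeds into $\H^{1,1}_{\delbar}$, and the question reduces to whether one extra complex dimension can be supplied by a harmonic form with nontrivial $\C\omega$-component.

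For the forward direction I would test the Ansatz $\eta=\omega+\gamma$, with $\gamma$ left-invariant and anti-self-dual. Using $*\omega=\omega$ and $*\gamma=-\gamma$ one has $*\eta=\omega-\gamma$, so splitting the harmonicity conditions $\delbar\eta=0$ and $\del(*\eta)=0$ by bidegree yields $\delbar\gamma=-\delbar\omega$ and $\del\gamma=\del\omega$. Since $\mu$ and $\c\mu$ vanish on $(1,1)$-forms in dimension $4$ (as $A^{3,0}=A^{0,3}=0$), one has $d\omega=\del\omega+\delbar\omega$ and $id^c\gamma=\del\gamma-\delbar\gamma$; summing the two bidegree equations then gives exactly $id^c\gamma=d\omega$. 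Thus the existence of such a $\gamma$ produces a Dolbeault harmonic form $\omega+\gamma$ linearly independent of the complexified ASD subspace, forcing $h^{1,1}_{\delbar}=b^-+1$ by Holt.

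The hard part will be the converse: extracting a left-invariant $\gamma$ from the abstract equality $h^{1,1}_{\delbar}=b^-+1$. My plan is to use the averaging projection $P\colon A^{*}(M)\to A^{*}(M)_{\text{inv}}$ defined by integration against a normalised Haar measure on $\Gamma\backslash G$; this commutes with the left-invariant operators $\delbar$, $\delbar^*$, $\Delta_{\delbar}$, and so carries $\H^{1,1}_{\delbar}$ into its left-invariant subspace. Since the complexified ASD harmonic subspace already admits left-invariant representatives, a rank argument should show that the extra complex dimension survives averaging with a nonzero $\omega$-component; rescaling that component to $1$ and taking the primitive part of the resulting left-invariant harmonic form returns the required $\gamma$ with $id^c\gamma=d\omega$.
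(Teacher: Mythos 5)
A preliminary remark: the paper does not prove this statement itself but imports it as \cite[Theorem 3.6]{P}, so your attempt can only be judged against the statement and the standard toolkit. Your ``if'' direction is correct and complete: in real dimension $4$ the complex anti-self-dual $2$-forms are exactly the primitive $(1,1)$-forms, the complexified anti-self-dual $d$-harmonic space sits inside $\H^{1,1}_{\delbar}$ by the bidegree splitting you describe, and your computation that $id^c\gamma=d\omega$ is equivalent (componentwise in bidegree, using $\mu=\c\mu=0$ on $(1,1)$-forms) to $\delbar(\omega+\gamma)=0$ and $\del\ast(\omega+\gamma)=0$ is exactly right; Holt's dichotomy then forces $h^{1,1}_{\delbar}=b^-+1$.

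The converse, however, contains a genuine gap, and it sits precisely at the crux of the theorem. Write the extra harmonic form as $\eta=f\omega+\gamma_0$ with $\gamma_0$ anti-self-dual; the symmetrization $P$ (well defined since $G$, having a lattice, is unimodular) sends it to $\bigl(\int_M f\vol\bigr)\,\omega+P\gamma_0$. If $\int_M f\vol=0$, this invariant harmonic form is anti-self-dual and yields no $\gamma$ with $id^c\gamma=d\omega$. Your phrase ``a rank argument should show that the extra complex dimension survives averaging with a nonzero $\omega$-component'' is exactly the assertion $\int_M f\vol\neq 0$, and no argument is offered. Dimension counting only tells you that the trace map $\H^{1,1}_{\delbar}\to\cinf(M,\C)$, $\eta\mapsto f$, has kernel the complexified anti-self-dual harmonic forms and hence one-dimensional image spanned by some $f_0\not\equiv 0$; nothing in that count prevents $f_0$ from having zero mean. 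Ruling this out requires the finer content of Holt's proof of the dichotomy: eliminating $\gamma_0$ from the two harmonicity equations gives a second-order elliptic scalar equation for $f_0$ to which a maximum-principle argument applies, allowing one to normalize $f_0$ to be real and nowhere vanishing, hence of nonzero mean; only then does averaging preserve the $\omega$-component. Without this input the averaging could a priori collapse the extra dimension. A secondary unjustified step is the claim that ``the complexified ASD harmonic subspace already admits left-invariant representatives'': for a general compact quotient $\Gamma\backslash G$ this is not known a priori (symmetrization need be neither injective nor surjective on $\H^-$), so it cannot serve as the pivot of the intended rank argument.
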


%\subsection{Non-integrable standard almost complex structure}
%\begin{remark}\label{rem-func-hyp}
%As $M$ is a quotient of $\C^2$ by the action of $\Gamma$, we can see a smooth function $f:M\to\C$ as a smooth function $f:\C^2\to\C$ which satisfies the following invariance property
%\[
%f(z^1,z^2)=f(w^1+e^{i\pi\frac{w^2+\c{w^2}}2}z^1,w^2+z^2)
%\]
%for every $(w^1,w^2)\in(\Z+i\Z)^2$.
%\end{remark}
We are now ready to prove the following
\begin{theorem}\label{main}
For $0<\rho<1$, let $M$ be the Hyperelliptic surface endowed with the non integrable, non locally conformally almost K\"ahler structure $(J,\omega_\rho)$ of Lemma \ref{lemma-gau}. Then 
\[
h^{1,1}_{\delbar}(M,J,\omega_\rho)=b^-=1.
\]
\end{theorem}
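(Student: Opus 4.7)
My plan is to apply Theorem \ref{teorema-piovani} to the left invariant structure $(J, \omega_\rho)$. Since $b^-=1$, the theorem reduces the task to showing that no left invariant anti-self-dual $\gamma\in A^{1,1}(G,J)$ satisfies $id^c\gamma=d\omega_\rho$; I will in fact establish the slightly stronger statement that no left invariant $\gamma\in A^{1,1}(G,J)$ at all solves this equation, which a fortiori settles the anti-self-dual case.

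The first step is a type reduction. Since $M$ has complex dimension two, the spaces $A^{3,0}$ and $A^{0,3}$ vanish, and the formula $id^c=\del-\delbar-\mu+\c\mu$ collapses on $A^{1,1}$ to $id^c\gamma=\del\gamma-\delbar\gamma$, while $d\omega_\rho=\del\omega_\rho+\delbar\omega_\rho$ for the same reason. Splitting $id^c\gamma=d\omega_\rho$ into its $(2,1)$- and $(1,2)$-components gives the equivalent system
\[
\del\gamma=\del\omega_\rho,\qquad \delbar\gamma=-\delbar\omega_\rho.
\]
The minus sign on the second equation is exactly what will create the obstruction: the $\del$- and $\delbar$-sides of $id^c$ disagree in sign, while those of $d$ agree.

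The second step is to tabulate $\del$ and $\delbar$ on each basis $(1,1)$-form from the structure equations \eqref{eq-struct-gau}. The key observation I expect is that, among $\phi^{1\c 1}, \phi^{1\c 2}, \phi^{2\c 1}, \phi^{2\c 2}$, only $\phi^{2\c 2}$ contributes to the $\phi^{12\c 1}$-coefficient of $\del$ and to the $\phi^{1\c 1\c 2}$-coefficient of $\delbar$, and moreover with the same nonzero constant in both cases, both traceable to $d\phi^{2\c 2}=-2i\pi\, e^{134}$; correspondingly $\del\omega_\rho$ and $\delbar\omega_\rho$ will be nonzero multiples of $\phi^{12\c 1}$ and $\phi^{1\c 1\c 2}$ respectively, again with equal coefficients.

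Writing a general left invariant $\gamma=A\phi^{1\c 1}+B\phi^{1\c 2}+C\phi^{2\c 1}+D\phi^{2\c 2}$ with $A,B,C,D\in\C$, the $\phi^{12\c 1}$-coefficient of $\del\gamma=\del\omega_\rho$ will fix one value of $D$, and the $\phi^{1\c 1\c 2}$-coefficient of $\delbar\gamma=-\delbar\omega_\rho$ will fix $D$ to the opposite value (concretely, $D=i/2$ versus $D=-i/2$): an impossibility. Consequently no $\gamma$ as required exists, and Theorem \ref{teorema-piovani} yields $h^{1,1}_{\delbar}(M,J,\omega_\rho)=b^-=1$. The main obstacle is really only the bookkeeping of $\del\phi^{i\c j}$ and $\delbar\phi^{i\c j}$ in the second step; once those are on the page the sign flip built into $id^c$ versus $d$ forces the contradiction immediately.
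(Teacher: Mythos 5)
Your proposal is correct and follows essentially the same route as the paper: both apply Theorem \ref{teorema-piovani} and derive a sign contradiction between the $(2,1)$- and $(1,2)$-components of $id^c\gamma=d\omega_\rho$ (the paper's linear system forces one and the same combination of coefficients to equal both $\frac12$ and $-\frac12$, which is exactly your $D=i/2$ versus $D=-i/2$). The only differences are cosmetic: you work directly in the $\phi$-coframe and drop the anti-self-duality constraint, proving the a fortiori stronger nonexistence over all left invariant $(1,1)$-forms, whereas the paper passes to the unitary coframe $\psi^1,\psi^2$ and restricts to anti-self-dual forms from the outset.
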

\begin{proof}
The fundamental form $\omega_\rho$ can also be rewritten as
\[
\omega_\rho=\frac{i}2(\psi^{1\c1}+\psi^{2\c2}),
\]
where we set
\[
\psi^1=\phi^1+i\rho\phi^2,\ \ \ \psi^2=\sqrt{1-\rho^2}\phi^2
\]
as a unitary coframe of $T^{1,0}(M,J)$.
We have
\begin{align*}
&\delbar\psi^1=-i\frac\pi4\phi^{1\c2}-i\frac\pi4\phi^{2\c1}-\rho\frac\pi2\phi^{1\c1},\\
&\del\psi^1=-i\frac\pi4\phi^{12},\\
&\delbar\psi^2=i\frac\pi2\sqrt{1-\rho^2}\phi^{1\c1},\\
&\del\psi^2=0.
\end{align*}
Define the volume form $\vol$ such that
\begin{equation*}
\vol=\frac{\omega_\rho^2}2=\frac14\psi^{12\c1\c2}.
\end{equation*}

In order to compute $h^{1,1}_{\delbar}(M,J,\omega_\rho)$, we will apply \cite[Theorem 3.6]{P}, see Theorem \ref{teorema-piovani} above, and therefore it suffices to show that there are no left invariant anti-self-dual $(1,1)$-forms $\gamma$ satisfying
$$
id^c\gamma=d\omega_\rho.
$$ 
For bidegree reasons, we have $id^c\gamma=\del\gamma-\delbar\gamma$.
Any left invariant anti-self-dual $(1,1)$-form $\gamma$ can be written as
\[
\gamma=B\psi^{1\c1}+C\psi^{1\c2}+D\psi^{2\c1}-B\psi^{2\c2},
\]
with $B,C,D\in\C$. Note that $*\gamma=-\gamma$, i.e., $\gamma$ is anti-self-dual.
We have
\begin{align*}
&\delbar\psi^{1\c1}=i\frac\pi2\rho^2\phi^{1\c1\c2},\\
&\delbar\psi^{1\c2}=-\frac\pi2\rho\sqrt{1-\rho^2}\phi^{1\c1\c2}-i\frac\pi4\sqrt{1-\rho^2}\phi^{2\c1\c2},\\
&\delbar\psi^{2\c1}=\frac\pi2\rho\sqrt{1-\rho^2}\phi^{1\c1\c2}-i\frac\pi4\sqrt{1-\rho^2}\phi^{2\c1\c2},\\
&\delbar\psi^{2\c2}=i\frac\pi2(1-\rho^2)\phi^{1\c1\c2}.
\end{align*}
Moreover, $\del\psi^{i\c{j}}=-\c{\delbar\psi^{j\c{i}}}$ for $i,j\in\{1,2\}$, therefore we obtain
\begin{align*}
\delbar\gamma&=\frac\pi2\Big(i(2\rho^2-1)B-\rho\sqrt{1-\rho^2}C+\rho\sqrt{1-\rho^2}D\Big)\phi^{1\c1\c2}+\\
&-i\frac\pi4\sqrt{1-\rho^2}\Big(C+D\Big)\phi^{2\c1\c2}
\end{align*}
and
\begin{align*}
\del\gamma&=\frac\pi2\Big(i(2\rho^2-1)B-\rho\sqrt{1-\rho^2}C+\rho\sqrt{1-\rho^2}D\Big)\phi^{12\c1}+\\
&-i\frac\pi4\sqrt{1-\rho^2}\Big(C+D\Big)\phi^{12\c2}.
\end{align*}
We have already computed 
\begin{align*}
d\omega_\rho&=\pi e^{134}\\
&=\pi\frac{i}2\phi^{1\c1}\wedge\frac1{2i}(\phi^2-\phi^{\c2})\\
&=-\frac\pi4(\phi^{12\c1}+\phi^{1\c1\c2}),
\end{align*}
%&=-\frac{i}2\pi\phi^{1\c1}\wedge\frac1{2i}(\phi^2-\phi^{\c2})
therefore $id^c\gamma=d\omega_\rho$ if and only if
\begin{equation*}
\begin{cases}
\delbar\gamma=\frac\pi4\phi^{1\c1\c2},\\
\del\gamma=-\frac\pi4\phi^{12\c1},
\end{cases}
\end{equation*}
if and only if
\begin{equation*}
\begin{cases}
C+D=0,\\
i(2\rho^2-1)B-\rho\sqrt{1-\rho^2}C+\rho\sqrt{1-\rho^2}D=\frac12,\\
i(2\rho^2-1)B-\rho\sqrt{1-\rho^2}C+\rho\sqrt{1-\rho^2}D=-\frac12,
\end{cases}
\end{equation*}
which is a contradiction. It follows that there are no left invariant anti-self-dual $(1,1)$-forms $\gamma$ satisfying $id^c\gamma=d\omega_\rho$ and so $h^{1,1}_{\delbar}(M,J,\omega_\rho)=b^-=1$ by Theorem \ref{teorema-piovani}.
\end{proof}

Summing up, we have just proved the following
\begin{theorem}\label{thm-hyp}
Let $M$ be a Hyperelliptic surface in the first isomorphism class. On $M$, there exist two non integrable almost Hermitian structures $(J_1,\omega_1)$ and $(J_2,\omega_2)$ which are not locally conformally almost K\"ahler and which satisfy 
\[
h^{1,1}_{\delbar}(M,J_1,\omega_1)=2=b^-+1,\ \ \ h^{1,1}_{\delbar}(M,J_2,\omega_2)=1=b^-.
\]
\end{theorem}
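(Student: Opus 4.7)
The plan is to observe that Theorem \ref{thm-hyp} is essentially a summary statement: it packages together the two constructions already carried out on the hyperelliptic surface $M$ in the preceding pages. So the proof would not introduce any new computation; it would simply exhibit the two almost Hermitian structures explicitly and invoke the previously established results.

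For the first structure, I would pick any $t \in \mathbb{C}$ with $0 \neq |t| < 1$ and any $\rho \in (0,1)$, and set $(J_1,\omega_1) := (J_t, \omega_{t,\rho})$, where $J_t$ and $\omega_{t,\rho}$ are defined in Lemma \ref{lemma-def}. By that lemma, $J_t$ is non integrable (indeed $d\phi^1_t$ has nontrivial $(0,2)$-component $-\frac{i\pi t}{1-|t|^2}\phi^{\c1\c2}_t$, so $\bar\mu \neq 0$) and $\omega_{t,\rho}$ is not locally conformally almost Kähler. The preceding theorem establishes $h^{1,1}_{\delbar}(M,J_1,\omega_1) = 2 = b^- + 1$, using the identity $\gamma^2 + 4|t|^2 - \beta^2 = 0$.

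For the second structure, I would set $(J_2,\omega_2) := (J, \omega_\rho)$ for any $\rho \in (0,1)$, where $J$ and $\omega_\rho$ are given by Lemma \ref{lemma-gau}. That lemma shows $J$ is non integrable and $\omega_\rho$ is Gauduchon but not locally conformally almost Kähler, and Theorem \ref{main} shows $h^{1,1}_{\delbar}(M,J_2,\omega_2) = 1 = b^-$ via the characterization from Theorem \ref{teorema-piovani}.

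The only mildly nontrivial bookkeeping is confirming $b^- = 1$ on $M$: this was already recorded right after the definition of the Kähler form $\omega$ in the introduction of Section \ref{sec-hyp}, and follows from $H^2_{dR}(M) = \mathbb{R}\langle e^{12}, e^{34}\rangle$ together with the signatures of these classes with respect to the natural orientation. Since there is really no obstacle here — everything has already been verified — the proof reduces to a one-line assembly citing the previous two theorems and the two lemmas.
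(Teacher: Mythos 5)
Your proposal is correct and matches the paper exactly: Theorem \ref{thm-hyp} is stated as a summary ("Summing up, we have just proved the following"), obtained by taking $(J_1,\omega_1)=(J_t,\omega_{t,\rho})$ from Lemma \ref{lemma-def} together with the theorem following it, and $(J_2,\omega_2)=(J,\omega_\rho)$ from Lemma \ref{lemma-gau} together with Theorem \ref{main}. Your additional checks (the nonvanishing $(0,2)$-component $-\tfrac{i\pi t}{1-|t|^2}\phi_t^{\c1\c2}$ of $d\phi^1_t$ witnessing non-integrability, and $b^-=1$ from the invariant de Rham cohomology) are consistent with what the paper records.
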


\section{$h^{1,1}_{\delbar}$ on the nilmanifold $\mathcal{N}$}\label{sec-nilm}

We start by recalling the description of the nilmanifold $\mathcal{N}$.
Following Hasegawa, \cite[Section 4, p. 758]{Ha}, let $G$ be the group $\R^4$ together with the operation
\[
(a,b,c,d)\cdot(x,y,z,t)=(x+a,y+b,z+ay+c,t+\frac12a^2y+az+d).
\]
Then the following $1$-forms
\begin{align*}
&e^1=dx,\\
&e^2=dy,\\
&e^3=dz-xdy,\\
&e^4=dt+\frac12x^2dy-xdz,
\end{align*}
on $G$ form a basis for $A^1(G)$, and they are left invariant. Let $\Gamma$ be the subgroup of $G$ given by $(2\Z)^4$ and call $\mathcal{N}$ the compact quotient $\Gamma\backslash G$. It follows that $e^1,e^2,e^3,e^4$ are still $1$-forms on $\mathcal{N}$ and form a basis of $A^1(\mathcal{N})$. The structure equations for $\mathcal{N}$ are
\[
de^1=0,\ \ \ de^2=0,\ \ \ de^3=-e^{12},\ \ \ de^4=-e^{13}.
\]
\begin{remark}
As $\mathcal{N}$ is a quotient of $\R^4$ by the action of $\Gamma$, we can see a smooth function $f:\mathcal{N}\to\R$ as a smooth function $f:\R^4\to\R$ which satisfies the following invariance property
\[
f(x,y,z,t)=f(x+2\alpha,y+2\beta,z+2\alpha y+2\gamma,t+2\alpha^2y+2\alpha z+2\delta)
\]
for every $(\alpha,\beta,\gamma,\delta)\in\Z^4$.
\end{remark}

We endow $\mathcal{N}$ with the non integrable almost complex structure $J$ defined by taking
\[
\phi^1=e^1+ie^2,\ \ \ \phi^2=e^3+ie^4
\]
as a basis of $A^{1,0}(\mathcal{N},J)$. The structure equations are
\[
d\phi^1=0,\ \ \ d\phi^2=-\frac{i}4(\phi^{12}+2\phi^{1\c1}+\phi^{1\c2}-\phi^{2\c1}+\phi^{\c1\c2}).
\]
We consider, on $(\mathcal{N},J)$, the almost Hermitian metric given by the fundamental form
\[
\omega=e^{12}+e^{34}=\frac{i}2(\phi^{1\c1}+\phi^{2\c2}).
\]
Take
\[
\vol=\frac{\omega^2}2=e^{1234}=\frac14\phi^{12\c1\c2}
\]
as the volume form.
The de Rham cohomology can be computed by looking only at the invariant forms, therefore
\[
H^2_{dR}=\R<e^{14},e^{23}>
\]
and $b^-=1$.
Since
\[
d\omega=-e^{124}=\theta\wedge\omega
\]
with $\theta=-e^4$, which is not closed, it follows that $\omega$ is not locally conformally almost K\"ahler. We are going to show that $h^{1,1}_{\delbar}(\mathcal{N},J,\omega)=b^-+1$.

Let $\eta\in A^{1,1}(\mathcal{N},J)$ be a left invariant $(1,1)$-form. We can write
\[
\eta=A\phi^{1\c1}+B\phi^{1\c2}+C\phi^{2\c1}+D\phi^{2\c2},
\]
where $A,B,C,D\in\C$.
Applying the Hodge $*$ operator we get
\[
*\eta=D\phi^{1\c1}-B\phi^{1\c2}-C\phi^{2\c1}+A\phi^{2\c2}.
\]
Recall that $\eta\in\H^{1,1}_{\delbar}$ iff $\delbar\eta=\del*\eta=0$. We compute
\[
\delbar\eta=\frac{i}4(-B+C-2D)\phi^{1\c1\c2},
\]
and
\[
\del*\eta=\frac{i}4(-B+C-2A)\phi^{12\c1}.
\]
It follows that the dimension of the space of invariant Dolbeault harmonic $(1,1)$-forms is $2=b^-+1$. By \cite[Theorem 3.1]{Ho}, $h^{1,1}_{\delbar}$ can be equal only to $b^-$ or $b^-+1$, therefore we conclude that $h^{1,1}_{\delbar}(\mathcal{N},J,\omega)=b^-+1$. Explicitly,
\[
\H^{1,1}_{\delbar}(\mathcal{N},J,\omega)=\C<2\phi^{1\c2}-\phi^{1\c1}-\phi^{2\c2},2\phi^{2\c1}+\phi^{1\c1}+\phi^{2\c2}>.
\]

We have just proved the following
\begin{proposition}\label{prop-nilm}
Let $\mathcal{N}$ be the 4-manifold previously defined. On $\mathcal{N}$, there exists a non integrable almost Hermitian structure $(J,\omega)$ which is not locally conformally almost K\"ahler and which satisfies $h^{1,1}_{\delbar}(\mathcal{N},J,\omega)=2=b^-+1$.
\end{proposition}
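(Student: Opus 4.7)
The plan is to make a concrete left-invariant choice of $(J,\omega)$ on $\mathcal{N}$ and verify each required property in turn. First I would set $\phi^1=e^1+ie^2$, $\phi^2=e^3+ie^4$ to define $J$, and take the diagonal metric $\omega=e^{12}+e^{34}=\frac{i}{2}(\phi^{1\c1}+\phi^{2\c2})$. Non-integrability should be read off the complex structure equations: using $de^3=-e^{12}$ and $de^4=-e^{13}$, one rewrites $d\phi^2$ in the complex coframe and sees a non-trivial $(0,2)$-component, ruling out integrability of $J$.

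Next I would check that $\omega$ is not locally conformally almost K\"ahler. A direct computation gives $d\omega=-e^{124}$, so by the Lefschetz isomorphism the unique Lee form with $d\omega=\theta\wedge\omega$ is $\theta=-e^4$. Since $d\theta=e^{13}\neq 0$, the Lee form fails to be closed, which excludes $\omega$ from the locally conformally almost K\"ahler class.

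The heart of the proof is computing $h^{1,1}_{\delbar}$. By \cite[Theorem 3.1]{Ho}, $h^{1,1}_{\delbar}\in\{b^-,b^-+1\}$; combined with $H^2_{dR}(\mathcal{N})=\R\langle e^{14},e^{23}\rangle$ and the $\pm 1$-eigenspace decomposition of $*$ on invariant $2$-forms for the chosen metric, this gives $b^-=1$. Hence it suffices to exhibit two linearly independent Dolbeault-harmonic $(1,1)$-forms. I would restrict to left-invariant $\eta=A\phi^{1\c1}+B\phi^{1\c2}+C\phi^{2\c1}+D\phi^{2\c2}$ and impose both $\delbar\eta=0$ and $\del*\eta=0$. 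Using $*\eta=D\phi^{1\c1}-B\phi^{1\c2}-C\phi^{2\c1}+A\phi^{2\c2}$ and the formulas for $\delbar\phi^{i\c j}$ derived from the complex structure equations, each of the two harmonicity conditions collapses to a single linear relation in $A,B,C,D$, so the invariant harmonic space is two-dimensional.

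The main obstacle is the bookkeeping in the complex coframe: converting $de^3=-e^{12}$, $de^4=-e^{13}$ into the complex structure equations for $d\phi^1,d\phi^2$ and then computing $\delbar$ on each basis $(1,1)$-form. Once these formulas are in place, solving for invariant harmonic forms is immediate, and Holt's upper bound $h^{1,1}_{\delbar}\leq b^-+1$ promotes the two invariant harmonic forms to the full count $h^{1,1}_{\delbar}=2=b^-+1$, sparing us the need to analyze non-invariant $(1,1)$-forms.
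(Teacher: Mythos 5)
Your proposal is correct and follows essentially the same route as the paper: the same choice of $J$ and diagonal metric $\omega$, the same Lee form computation $\theta=-e^4$ with $d\theta\neq 0$, and the same strategy of solving $\delbar\eta=0$, $\del*\eta=0$ on left-invariant $(1,1)$-forms to get a two-dimensional space, then invoking Holt's dichotomy $h^{1,1}_{\delbar}\in\{b^-,b^-+1\}$ to conclude. No substantive differences.
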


\end{document}